\newcommand{\Sym}{\mathrm{Sym}}
\newcommand{\Fix}{\mathrm{Fix}}
\newcommand{\Aut}{\mathrm{Aut}}
\theoremstyle{plain}
\newtheorem{corollary}{Corollary}
\newtheorem{lemma}{Lemma}
\newtheorem{theorem}{Theorem}
\theoremstyle{definition}
\newtheorem{remark}{Remark}
\begin{document}

\title{The number of configurations in the full shift with a given least period}
\author{Alonso Castillo-Ramirez\footnote{Email: alonso.castillor@academicos.udg.mx} \ and Miguel S\'anchez-\'Alvarez\footnote{Email: miguel.sanchez1273@academicos.udg.mx }  \\
\small{Department of Mathematics, University Centre of Exact Sciences and Engineering,\\ University of Guadalajara.} }

\maketitle

\begin{abstract}
For any group $G$ and any set $A$, consider the shift action of $G$ on the full shift $A^G$. A configuration $x \in A^G$ has \emph{least period} $H \leq G$ if the stabiliser of $x$ is precisely $H$. Among other things, the number of such configurations is interesting as it provides an upper bound for the size of the corresponding $\Aut(A^G)$-orbit. In this paper we show that if $G$ is finitely generated and $H$ is of finite index, then the number of configurations in $A^G$ with least period $H$ may be computed by using the M\"obius function of the lattice of subgroups of finite index in $G$. Moreover, when $H$ is a normal subgroup, we classify all situations such that the number of $G$-orbits with least period $H$ is at most $10$.    \\

\textbf{Keywords:} Full shift; periodic configurations; subgroup lattice; M\"obius function. \\

\textbf{MSC2020 codes:} 37B10, 20D30.
\end{abstract}

%
%
%
%

\section{Introduction}\label{intro}

Let $G$ be a group and let $A$ be a set. Consider the set $A^G$ of all functions from $G$ to $A$ equipped with the \emph{shift action} of $G$, defined by
\[ (g \cdot x) (h) := x(g^{-1}h),  \]
for all $g, h \in G$ and $x \in A^G$. Although we shall not focus on this, the set $A^G$ is usually seen as a topological space with the product topology of the discrete topology on $A$. 

The $G$-space $A^G$ is a fundamental object in areas such as symbolic dynamics and the theory of cellular automata (e.g. see \cite{CSC10,LM95}). Following \cite{CSC10}, we call the elements of $A^G$ \emph{configurations}. For any $x \in A^G$, the \emph{stabiliser} $G_x$ of $x$ and the \emph{$G$-orbit} $Gx$ of $x$ are defined as follows:
\[ G_x := \{ g \in G : g \cdot x = x \} \quad \text{ and } \quad Gx := \{ g \cdot x \in A^G  : g \in G \}. \] 

For a subgroup $H$ of $G$, a configuration $x \in A^G$ has \emph{period} $H$, or is \emph{$H$-periodic}, if $h \cdot x = x$ for all $h \in H$, or, equivalently, if $H \leq G_x$. Denote by $\Fix(H)$ the subset of $A^G$ consisting of all $H$-periodic configurations. It is known (see \cite[Proposition 1.3.3]{CSC10}) that $\Fix(H)$ is in bijection with $A^{H\setminus G}$, where $H\setminus G = \{ Hg :g \in G \}$ is the set of rights cosets of $H$ in $G$. Hence, it follows that $\vert \Fix(H) \vert = \vert A \vert ^{[G:H]}$, where $[G:H] := \vert H\setminus G \vert $ is the index of $H$ in $G$. In particular, the configurations whose period is the trivial subgroup of $G$ are known as \emph{aperiodic points}, and have been used in \cite{GJS16} as powerful tools to study the dynamics in $A^G$ and its \emph{subshifts}, or \emph{subflows} (i.e. closed $G$-equivariant subsets of $A^G$).

We say that $x \in A^G$ has \emph{least period}, or \emph{fundamental period}, $H$ if $G_x = H$ (c.f. \cite[Definition 1.1.3.]{LM95}). In this paper we are interested in the number $\psi_H(G;A)$ of configurations with least period $H$: 
\[ \psi_H(G;A) := \vert \{ x \in A^G : G_x = H \} \vert. \]  
If $x,y \in A^G$ satisfy that $y=g\cdot x$, then $G_y = g G_x g^{-1}$; hence, it is sometimes convenient to consider the $G$-invariant set $\{ x \in A^G : [G_x] = [H] \}$, where $[H] := \{gHg^{-1} : g \in G \}$ is the conjugacy class of $H$, and its cardinality
\[ \psi_{[H]}(G;A) := \vert \{ x \in A^G : [G_x] = [H] \} \vert. \]
As $\psi_{H}(G;A) = \psi_{gHg^{-1}}(G;A)$ for all $g \in G$, we have
\[ \psi_{[H]}(G;A) = \vert [H] \vert \;  \psi_H(G;A).   \]
Finally, we also consider the number of $G$-orbits whose stabiliser is conjugate to $H$:
\[ \alpha_{[H]}(G;A) := \vert \{ Gx : [G_x] = [H] \} \vert. \]
By the Orbit-Stabiliser Theorem (\cite[Theorem 7.2.1]{R12}), all $G$-orbits inside $\{  x \in A^G : [G_x] = [H]\}$ have size $[G:H]$; therefore, we have
\[ \alpha_{[H]}(G;A) \; [G:H] = \psi_{[H]}(G;A).   \]  
  
Besides being interesting for their own right, the above numbers have connections with the structure of the automorphism group of $A^G$. Recall that a map $\tau : A^G \to A^G$ is \emph{$G$-equivariant} if $\tau(g \cdot x) = g \cdot \tau(x)$, for all $g \in G$, $x \in A^G$. Let $\Aut(A^G)$ the group of all $G$-equivariant homeomorphisms of $A^G$. By the Curtis-Heldund Theorem (\cite[Theorem 1.8.1]{CSC10}), $\Aut(A^G)$ is the same as the group of invertible cellular automata of $A^G$. It follows by $G$-equivariance that for every $\tau \in \Aut(A^G)$, $x \in A^G$, we have $G_{x} = G_{\tau(x)}$. Thus, $\psi_{G_x}(G;A)$ is an upper bound for the cardinality of the $\Aut(A^G)$-orbit of $x$. Moreover, if the group $G$ is finite, the structure of $\Aut(A^G)$ was described in \cite[Theorem 3]{CRG19} as
\begin{equation}\label{aut-desc}
\Aut(A^G) \cong \prod_{i=1}^{r} ( (N_G(H_i)/H_i) \wr \Sym_{\alpha_i}),
\end{equation}
where $[H_1], \dots, [H_r]$ is the list of all different conjugacy classes of subgroups of $G$, and $\alpha_i = \alpha_{[H_i]}(G;A)$, as defined above. Hence, the structure of $\Aut(A^G)$ completely depends on the quotient groups $N_G(H_i)/H_i$, which may be easily calculated by knowing the group $G$, and the integers $\alpha_{[H_i]}(G;A)$, which depend on $\psi_H(G;A)$. Finally, in \cite{BK87,BLR88}, the sets of points of a given least period were a fundamental tool in the study of automorphism groups of shifts of finite type, which include the group $\Aut(A^{\mathbb{Z}})$.

As $\psi_H(G;A)$ is finite if and only if $[G:H]$ is finite (see Lemma \ref{le-finite-index} below), we shall focus on finite index subgroups of $G$. In the first part of this paper, we prove that, when $G$ is finitely generated, the poset $L(G)$ of finite index subgroups of $G$ is a locally finite lattice, so we use M\"obius inversion to show that 
\begin{equation}\label{eq-main}
  \psi_H(G;A) =  \sum_{H \leq K \leq G} \mu(H,K) \vert A \vert^{[G:K]},  
\end{equation}
where $\mu$ is the M\"obius function of $L(G)$. In the second part of this paper, we note that if $H$ is a normal subgroup, then $\psi_H(G;A) = \psi_1(G/H;A)$ and $\alpha_{[H]}(G;A) = \alpha_{[1]}(G/H;A)$. Hence, by computing the M\"obius function of the subgroup lattice of all finite groups of size up to $7$, we classify under which situations we have $\alpha_{[H]}(G;A) \leq 10$.  

Our work generalises previous results known in the literature. When $G = \mathbb{Z}_n$ is a cyclic group and $H=1$ is the trivial subgroup, $\alpha_{[1]}(\mathbb{Z}_n;A)$ is equivalent to the number of aperiodic necklaces of length $n$, and equation (\ref{eq-main}) gives the so-called Moreau's necklace-counting function \cite{M72}. Moreover, $\alpha_{[1]}(\mathbb{Z}_n;A)$ is also equivalent to the number of Lyndon words of length $n$ (see Sec. 5.1. in \cite{L97}). For a finite group $G$, this equation may be derived using the result of Sec. 4 in \cite{Ke99}. However, as far as we know, equation (\ref{eq-main}) had not been derived when $G$ is an arbitrary finitely generated group.

\section{Periodic configurations when $G$ is finitely generated}

For the rest of the paper, let $A$ be a set with at least two elements and assume that $\{0,1 \} \subseteq A$. We begin by justifying our claim that $\psi_H(G;A)$ is finite if and only if $[G:H]$ is finite. 

\begin{lemma}\label{le-finite-index}
Let $G$ be a group and let $H$ be a subgroup of $G$. Then $\psi_H(G;A)$ is finite if and only if $[G:H]$ is finite.
\end{lemma}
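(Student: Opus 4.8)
The plan is to prove the two implications separately, with the forward direction being routine and the converse carrying the real content.

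For the easy direction, suppose $[G:H]$ is finite. Every configuration counted by $\psi_H(G;A)$ satisfies $H \leq G_x$ and hence lies in $\Fix(H)$. Using the bijection $\Fix(H) \cong A^{H\setminus G}$ recalled in the introduction (see \cite[Proposition 1.3.3]{CSC10}), we get $|\Fix(H)| = q^{[G:H]}$, where $q = |A|$. As $A$ is finite and $[G:H]$ is finite, this is a finite number, so $\psi_H(G;A) \leq |\Fix(H)| < \infty$.

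For the converse I would prove the contrapositive: if $[G:H]$ is infinite, I will exhibit infinitely many configurations whose least period is exactly $H$. The key construction is the family of coset indicators: for each right coset $C \in H\setminus G$, define $\mathbb{1}_C \in A^G$ by $\mathbb{1}_C(h) = 1$ if $h \in C$ and $\mathbb{1}_C(h) = 0$ otherwise (this uses only $\{0,1\}\subseteq A$, so the argument works even when $|A| = 2$). First I would check that $\mathbb{1}_C$ is constant on each right $H$-coset, so that $\mathbb{1}_C \in \Fix(H)$ and $H \leq G_{\mathbb{1}_C}$. Then the central computation is that the shift merely translates the support: a direct calculation gives $g \cdot \mathbb{1}_C = \mathbb{1}_{gC}$, so that $g \in G_{\mathbb{1}_C}$ if and only if $gC = C$.

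It remains to read off the stabiliser from this. Writing $C = Hg_0$, the equation $gC = C$ becomes $gHg_0 = Hg_0$, which upon cancelling $g_0$ on the right is equivalent to $gH = H$, i.e. to $g \in H$. Hence $G_{\mathbb{1}_C} = H$ exactly, for every coset $C$. Since $[G:H]$ is infinite there are infinitely many distinct cosets $C$, and distinct cosets yield configurations with distinct supports, hence distinct configurations; all of them have least period $H$, so $\psi_H(G;A)$ is infinite, completing the contrapositive. The only genuinely delicate point is the converse, and within it the requirement that the least period be exactly $H$ rather than some larger subgroup. This is precisely what the single-coset indicator settles: because $C$ is a single right coset, left multiplication by $g$ fixes $C$ only for $g \in H$, so no accidental extra periods can appear. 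I expect this to be the crux, whereas the translation identity $g\cdot \mathbb{1}_C = \mathbb{1}_{gC}$ and the finite direction are routine.
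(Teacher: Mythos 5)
Your proposal is correct and matches the paper's proof essentially verbatim: the paper also handles the finite direction via $\Fix(H)$ and, for the converse, uses exactly the same coset-indicator configurations (the paper writes them as $x_s$ with support $Hs$, indexed by $s \in G$ rather than by cosets), verifying that their stabiliser is precisely $H$ and that infinitely many cosets give infinitely many such configurations. The only cosmetic difference is that you establish $G_{\mathbb{1}_C} = H$ via the translation identity $g \cdot \mathbb{1}_C = \mathbb{1}_{gC}$ and set equality $gHg_0 = Hg_0$, whereas the paper evaluates the shifted configuration at a single point; both are routine verifications of the same fact.
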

\begin{proof}
If $[G:H]$ is finite, then $\psi_H(G;A)$ is clearly finite, as every configuration with least period $H$ is contained in $\Fix(H)$ and $\vert \Fix(H) \vert = \vert A \vert^{[G:H]} < \infty$. 

Conversely, suppose that $[G:H]$ is infinite. Let $T \subseteq G$ be a transversal for the set of right cosets of $H$ in $G$, i.e., $T$ contains exactly one element from each right coset of $H$ in $G$. It is clear that $\vert T \vert = [G:H]$. For each $s \in T$, consider the configuration $x_s \in A^G$ defined by
\[ x_s(g) = \begin{cases}
1 & \text{ if } g \in Hs \\
0 & \text{ otherwise}
\end{cases} ,\]
for any $g \in G$. Given $h \in H$, then $h \cdot x_s(g) = x_s(h^{-1}g) = x_s(g)$, as $h^{-1}g \in Hs$ if and only if $g \in Hs$. Hence, $H \leq G_{x_s}$. On the other hand, if $k \in G_{x_s}$, then $k \cdot x_s = x_s$; in particular we have $(k \cdot x_s)(s) =x_s(k^{-1}s) = x_s(s) = 1$, which implies that $k^{-1}s \in Hs$. Therefore, $k \in H$, which shows that $G_{x_s}=H$. As $\vert T \vert = [G:H]$ is infinite, we have constructed infinitely many different configurations with least period $H$, which establishes that $\psi_{H}(G;A)$ is infinite.   
\end{proof}

We shall recall some basic definitions on posets; for further details see \cite[Ch. 3]{S12}. Recall that a \emph{partially ordered set}, or a \emph{poset}, is a set $P$ equipped with a partial order relation $\leq$. Given $s, t \in P$ with $s \leq t$, define the closed interval $[s,t] := \{ u \in P : s \leq u \leq t \}$. We say that $P$ is \emph{locally finite} if every closed interval of $P$ is finite. A \emph{chain} of $P$ is a subposet $S$ of $P$ that is totally ordered, i.e. any two elements of $S$ are comparable. For $t \in P$, the \emph{principal order ideal generated by $t$} is $\Lambda_t := \{ s \in P : s \leq t \}$, and the \emph{principal dual order ideal generated by $t$} is $V_t := \{s \in P : s \geq t \}$.

A \emph{lattice} is a poset $L$ for which every pair of elements $s,t \in L$ has a lest upper bound, denoted by $s \lor t$ and read $s$ \emph{join} $t$, and a greatest lower bound, denoted by $s \land t$ and read $s$ \emph{meet} $t$.  

The \emph{M\"obius function} of a locally finite poset $P$ is the map $\mu : P \times P \to \mathbb{Z}$ defined inductively by the following equations:
\begin{align*}
\mu(a,a) & = 1, \ \ \forall a \in P, \\
\mu(a,b) &= 0, \ \ \forall a \not\leq b, \\
\sum_{a \leq c \leq b} \mu(a,c) &= 0, \ \ \forall a < b.
\end{align*}

The M\"obius function is the inverse of the zeta function of a locally finite poset, and it importantly satisfies the so-called M\"obius inversion formula (see \cite[Sec. 3.7]{S12}). In this section we shall use the dual form of the M\"obius inversion formula \cite[Proposition 3.7.2]{S12}. 

\begin{theorem}[M\"obius inversion formula, dual form]\label{Mobius}
Let $P$ be a poset for which every principal dual order ideal $V_t$ is finite. Consider functions $f,g : P \to K$, where $K$ is a field. Then
\[ g(t) = \sum_{s \geq t} f(s), \quad \forall t \in P, \]
if and only if
\[ f(t) = \sum_{s \geq t} g(s) \mu(t,s), \quad \forall t \in P.  \]
\end{theorem}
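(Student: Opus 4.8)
The plan is to prove the two implications directly, in each case substituting the assumed identity into the claimed one and then interchanging the order of a double summation. The crucial preliminary observation is that the hypothesis guarantees that every sum in sight is finite: if $t \leq s \leq u$ then $u \in V_t$, so any double sum of the form $\sum_{s \geq t}\sum_{u \geq s}$ ranges over the finite set $\{(s,u) : t \leq s \leq u\} \subseteq V_t \times V_t$. This finiteness legitimises every rearrangement below, and it is the only place where the hypothesis on $V_t$ enters.

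For the forward direction, assume $g(t) = \sum_{s \geq t} f(s)$ for all $t$. I would compute
\[ \sum_{s \geq t} g(s)\mu(t,s) = \sum_{s \geq t}\mu(t,s)\sum_{u \geq s} f(u) = \sum_{u \geq t} f(u) \sum_{t \leq s \leq u} \mu(t,s), \]
where the last equality regroups the terms according to $u$. By the defining recurrence of the M\"obius function, the inner sum $\sum_{t \leq s \leq u}\mu(t,s)$ equals $1$ when $u = t$ and $0$ when $u > t$; hence only the term $u = t$ survives and the right-hand side collapses to $f(t)$, as required.

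For the backward direction, assume $f(t) = \sum_{s \geq t} g(s)\mu(t,s)$. The analogous manipulation yields
\[ \sum_{s \geq t} f(s) = \sum_{s \geq t}\sum_{u \geq s} g(u)\mu(s,u) = \sum_{u \geq t} g(u) \sum_{t \leq s \leq u} \mu(s,u), \]
and now I would need the inner sum $\sum_{t \leq s \leq u}\mu(s,u)$ to equal $1$ for $u = t$ and $0$ for $u > t$. This is the main obstacle: it is the transposed companion of the defining recurrence, fixing the \emph{second} argument rather than the first, and it does not follow formally from the definition as stated. I would establish it either by a short induction on the length of the interval $[t,u]$, or, more conceptually, by recognising $\mu$ as the inverse of the zeta function in the incidence algebra of the (locally finite) poset $P$ and invoking the uniqueness of a two-sided inverse in a ring with identity, so that $\mu$ is simultaneously a left and a right inverse of $\zeta$; the two recurrences are precisely these two one-sided inverse conditions. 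Granting this, the inner sum again collapses and the right-hand side reduces to $g(t)$, which completes the proof.
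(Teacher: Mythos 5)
Your proof is correct, but note that there is nothing in the paper to compare it with: the paper states this theorem as a quotation from Stanley's \emph{Enumerative Combinatorics} (the cited Proposition 3.7.2) and gives no proof of its own. Your argument is the standard one used in that source: substitute one identity into the other, interchange the double sum (legitimate precisely because $\{(s,u) : t \leq s \leq u\} \subseteq V_t \times V_t$ is finite), and collapse the inner sum. More importantly, you correctly spotted the one real subtlety: the paper's definition of $\mu$ gives only the recurrence $\sum_{t \leq s \leq u} \mu(t,s) = 0$ for $t < u$ (first argument fixed), so the forward direction is immediate, while the backward direction needs the transposed identity $\sum_{t \leq s \leq u} \mu(s,u) = 0$ for $t < u$, which does not follow formally from the definition as written. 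Both of your proposed repairs are sound, though the incidence-algebra route needs slightly more than ``uniqueness of two-sided inverses'': you must first \emph{produce} a right inverse $\mu'$ of $\zeta$, defined by the transposed recursion (which makes sense because $[t,u] \subseteq V_t$ is finite, so the poset is locally finite), and then associativity of convolution gives
\[ \mu = \mu * (\zeta * \mu') = (\mu * \zeta) * \mu' = \mu', \]
so $\mu$ is a two-sided inverse of $\zeta$ and the transposed recurrence holds. This is exactly the fact the paper alludes to in the sentence preceding the theorem (``the M\"obius function is the inverse of the zeta function''); your alternative suggestion, an induction on the length of the interval $[t,u]$, works equally well and keeps the proof entirely self-contained.
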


For any group $G$, it is standard to consider the poset of all subgroups of $G$ ordered by inclusion. Here, we shall consider the poset $L(G)$ of all subgroups of $G$ of finite index ordered by inclusion. The following is a key observation for this section.

\begin{lemma}\label{le-key}
The poset $L(G)$ is a lattice. Furthermore, if $G$ is finitely generated, then for every $H \in L(G)$, the principal dual order ideal $V_H = \{ K \leq G : H \leq K \}$ is finite, so $L(G)$ is a locally finite lattice. 
\end{lemma}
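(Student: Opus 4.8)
The plan is to realise $L(G)$ as a sublattice of the lattice of \emph{all} subgroups of $G$, in which meet is intersection and join is the subgroup generated by the union, and then to verify that these two operations preserve finiteness of the index. For the meet of $H, K \in L(G)$ I would take $H \cap K$ and bound its index by Poincar\'e's inequality $[G : H \cap K] \le [G:H]\,[G:K]$, which follows from the injection $g(H\cap K) \mapsto (gH, gK)$ of $G/(H \cap K)$ into $G/H \times G/K$; hence $H \cap K \in L(G)$, and since it is already the greatest lower bound among \emph{all} subgroups, it is the meet in $L(G)$. For the join I would take $H \vee K = \langle H \cup K \rangle$; this contains $H$, so $[G : \langle H \cup K\rangle] \le [G:H] < \infty$ and it lies in $L(G)$. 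Being the least upper bound in the full subgroup lattice, it is a fortiori the least upper bound among the finite-index subgroups. This establishes that $L(G)$ is a lattice.

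For the second assertion, the first thing to note is that every $K \in V_H$ automatically has finite index, since $H \le K$ forces $[G:K] \le [G:H] < \infty$; thus the membership condition for $L(G)$ is free, and $V_H$ is precisely the set of intermediate subgroups between $H$ and $G$. To see this set is finite, I would observe that any $K$ with $H \le K$ is a union of right cosets of $H$: for $k \in K$ we have $Hk \subseteq K$, so $K = \bigcup_{k \in K} Hk$. As $H$ has exactly $[G:H]$ right cosets in $G$, each such $K$ is determined by the subset of cosets it comprises, whence $\lvert V_H \rvert \le 2^{[G:H]} < \infty$. Local finiteness then follows at once, since any closed interval $[H,K]$ of $L(G)$ is contained in $V_H$.

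The only genuine input above is Poincar\'e's bound for the meet; everything else is elementary coset bookkeeping, so I do not expect a serious obstacle. It is worth remarking that the coset argument for $V_H$ does not actually use that $G$ is finitely generated, the finiteness of $[G:H]$ already sufficing. If one instead preferred to argue through the classical fact that a finitely generated group has only finitely many subgroups of each finite index $n$ (counting the homomorphisms $G \to \Sym_n$, of which there are finitely many because the images of a finite generating set determine such a map), this is where the finite-generation hypothesis would enter, and justifying that counting statement would become the main step. The coset argument sidesteps it entirely.
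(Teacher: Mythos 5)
Your proof is correct, and for the crucial second assertion it takes a genuinely different route from the paper. For the lattice part the two arguments essentially coincide: the paper also realises $L(G)$ as a sublattice of the full subgroup lattice with $H \wedge K = H \cap K$ and $H \vee K = \langle H \cup K \rangle$, citing Poincar\'e's theorem for the meet (you supply its standard proof via the injection $g(H\cap K) \mapsto (gH,gK)$) and the index factorisation $[G:H]=[G:K][K:H]$ for the join. For the finiteness of $V_H$, however, the paper argues globally: every $K \in V_H$ has index dividing $[G:H]$, and a finitely generated group has only finitely many subgroups of any given finite index (its reference is Theorem 4.20 of Roman's book, essentially M.~Hall's counting argument via homomorphisms $G \to \Sym_n$) --- precisely the alternative you sketch at the end of your proposal. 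Your coset-bookkeeping argument, giving $\lvert V_H \rvert \le 2^{[G:H]}$ because each intermediate subgroup is a union of right cosets of $H$, is both more elementary and more general: it shows that the finite-generation hypothesis is superfluous for this conclusion, since in an arbitrary group the interval above a finite-index subgroup is always finite. Note that finite generation \emph{is} genuinely needed for the paper's global statement (an infinite direct sum of copies of $\mathbb{Z}_2$ has infinitely many subgroups of index $2$), but not for the relative statement about $V_H$; so your argument proves the lemma as stated and in fact a slightly stronger one, while the paper's route buys only brevity of citation.
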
 
\begin{proof}
We shall show that $L(G)$ is a sublattice of the subgroup lattice of $G$ by showing that it is closed under the join, given by $H \lor J = \langle H \cup J \rangle$, and the meet, given by $H \land J = H \cap J$. 

Let $H$ and $K$ be subgroups of $G$ such that $H \leq K$. It is well-known (see, for instance \cite[Theorem 3.1.3]{R12}) that the indices of $H$ and $K$ in $G$ satisfy, as cardinal numbers, that
\[ [G:H] = [G:K] [K:H]. \]
Hence, if $[G:H]$ is finite, then $[G:K]$ must be finite. This implies that for any $H, J \in L(G)$, then $\langle H \cup J \rangle \in L(G)$. On the other hand, it is also well-known (see, for instance \cite[Theorem 3.1.6]{R12}) that the intersection of subgroups of finite index has finite index, so $H \cap J \in L(G)$, and the first part of the lemma follows. 

For the second part, for any $H \in L(G)$ and $K \in V_H$, the index of $K$ in $G$ must be a divisor of $[G:H]$. The result follows as in a finitely generated group there are only finitely many subgroups of a given finite index (this is a well-known theorem by M. Hall \cite{H50}; see also \cite[Theorem 4.20]{R12}).  
\end{proof}

The previous lemma allows us to use the M\"obius inversion formula for the poset $L(G)$ when $G$ is finitely generated. Let $\mu$ be the M\"obius function of $L(G)$. 

\begin{theorem}\label{th-main}
Let $G$ be a finitely generated group, let $H$ be a subgroup of $G$ of finite index, and let $A$ be a finite set. Then,
\[  \psi_H(G;A) =  \sum_{H \leq K \leq G} \mu(H,K) \vert A \vert^{[G:K]}.  \]
\end{theorem}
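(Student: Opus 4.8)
The plan is to express $\lvert \Fix(H) \rvert$ as a sum of the quantities $\psi_K(G;A)$ over all finite-index subgroups $K$ containing $H$, and then invert this relation using the dual M\"obius inversion formula (Theorem \ref{Mobius}) applied to the lattice $L(G)$.

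First I would observe that for any configuration $x \in \Fix(H)$, its stabiliser $G_x$ is a subgroup of $G$ containing $H$. Since $[G:H]$ is finite and $H \leq G_x \leq G$, the multiplicativity of the index (already invoked in the proof of Lemma \ref{le-key}) shows that $[G:G_x]$ is finite, so $G_x \in V_H \subseteq L(G)$; conversely every $K \in V_H$ satisfies $\Fix(K) \subseteq \Fix(H)$. Since the stabiliser of a configuration is uniquely determined, I can partition $\Fix(H)$ according to the exact value of the stabiliser,
\[ \Fix(H) = \bigsqcup_{K \in V_H} \{ x \in A^G : G_x = K \}. \]
Counting cardinalities and using $\lvert \Fix(H) \rvert = q^{[G:H]}$ yields
\[ q^{[G:H]} = \sum_{H \leq K \leq G} \psi_K(G;A), \quad \forall H \in L(G). \]
By Lemma \ref{le-key}, $V_H$ is finite because $G$ is finitely generated, so this is a finite sum and both sides are well defined.

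With this counting identity in hand I would set up the inversion. Define $f, g : L(G) \to \mathbb{Q}$ by $f(K) := \psi_K(G;A)$ and $g(K) := q^{[G:K]} = \lvert \Fix(K) \rvert$. The identity just established reads $g(H) = \sum_{K \geq H} f(K)$ for every $H \in L(G)$. Since every principal dual order ideal $V_H$ of $L(G)$ is finite, Theorem \ref{Mobius} applies with the field $\mathbb{Q}$, giving
\[ f(H) = \sum_{K \geq H} g(K)\,\mu(H,K), \]
which is precisely the claimed formula $\psi_H(G;A) = \sum_{H \leq K \leq G} \mu(H,K) q^{[G:K]}$.

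The argument is essentially routine once the counting identity is established; the only point requiring care is verifying that every stabiliser appearing in the decomposition is itself of finite index in $G$. This is what guarantees that the summation ranges over the finite set $V_H$ inside $L(G)$, and that the values $\mu(H,K)$ are defined at all. I expect no substantial obstacle beyond securing these finiteness conditions, all of which are furnished by the hypothesis that $G$ is finitely generated together with Lemma \ref{le-key}.
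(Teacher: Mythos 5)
Your proof is correct and follows essentially the same route as the paper: partition $\Fix(H)$ by exact stabiliser to get $q^{[G:H]} = \sum_{K \geq H} \psi_K(G;A)$, then apply the dual M\"obius inversion formula on $L(G)$, with finiteness of $V_H$ guaranteed by Lemma \ref{le-key}. The only difference is that you spell out the justification (stabilisers of $H$-periodic configurations lie in $V_H$) that the paper compresses into ``it follows from the definitions.''
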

\begin{proof}
It follows from the definitions that
\[ \vert \Fix(H) \vert = \sum_{K \geq H} \psi_K(G;A) .  \]
By Lemma \ref{le-key} this summation is finite and we may use Theorem \ref{Mobius}, with $g(H) = \vert \Fix(H) \vert$ and $f(K) =  \psi_K(G;A)$. Therefore, we obtain
\[ \psi_H(G;A) = \sum_{K \geq H} \mu(H,K) \vert \Fix(K) \vert .  \]
The result follows as $\vert \Fix(K) \vert= \vert A \vert^{[G:K]}$ by \cite[Proposition 1.3.3]{CSC10}.
\end{proof}

\begin{remark}
Note that, for any $H,J \in L(G)$, the value of $\mu(H,J)$ only depends on the on the interval $[H,J]$. Hence, $\psi_H(G;A)$ may be calculated by only knowing the subposet $[H,G]$.  
\end{remark}

\begin{corollary}
With the notation of Theorem \ref{th-main}, suppose that the interval from $H$ to $G$ consists of a chain $H = H_0 < H_1 < \dots < H_k=G$. Then,
\[ \psi_H(G;A) = \vert A \vert^{[G:H]} - \vert A \vert^{[G:H_1]}.   \]
In particular, if $H$ is a maximal subgroup of $G$, then
\[ \psi_H(G;A)  = \vert A \vert^{[G:H]} - \vert A \vert.   \]
\end{corollary}
\begin{proof}
By Theorem \ref{th-main},
\[  \psi_H(G;A) =  \sum_{i=0}^k \mu(H,H_i) \vert A \vert^{[G:H_i]}. \]
Now, by the definition of the M\"obius function, 
\begin{align*}
\mu(H,H_0) & = 1, \\
\mu(H,H_1) & = -1, \\
\mu(H,H_i) & = 0, \quad  \quad \forall i=2,3,\dots, k. 
\end{align*}
The result follows. 
\end{proof}

\begin{corollary}
With the notation of Theorem \ref{th-main},
\begin{align*}
\psi_{[H]}(G;A) & = \vert [H] \vert \sum_{H \leq K \leq G} \mu(H,K) \vert A \vert^{[G:K]}, \\[1em]
\alpha_{[H]}(G;A)  & = \frac{\vert [H] \vert }{[G:H]}  \sum_{H \leq K \leq G} \mu(H,K) \vert A \vert^{[G:K]}.
\end{align*}
\end{corollary}

\section{Configurations with normal period}

In this section we shall specialise on the case when $H$ is a normal subgroup of $G$ of finite index. In this case, the conjugacy class of $H$ just contains $H$ itself, so
\[ \psi_H(G;A) = \psi_{[H]}(G;A). \] 
Denote by $1$ the trivial subgroup. The following result has been noted in \cite[Lemma 6]{CRG19}.

\begin{lemma}\label{le-finite}
Let $G$ be any group and let $H$ be a normal subgroup of $G$ of finite index. Then,
\[\psi_H(G;A) = \psi_1(G/H;A) \ \text{ and } \ \alpha_{[H]}(G;A) = \alpha_{[1]}(G/H; A).    \]
\end{lemma}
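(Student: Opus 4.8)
The plan is to exploit the standard identification of $\Fix(H)$ with $A^{G/H}$ and upgrade it to an \emph{equivariant} bijection, after which both equalities follow by a direct translation of stabilisers and orbits. To set up the bijection, let $\pi : G \to G/H$ be the canonical projection. A configuration $x \in A^G$ satisfies $H \leq G_x$ precisely when $x$ is constant on each right coset $Hg$; as $H$ is normal these coincide with the left cosets, so $x$ factors uniquely through $\pi$, giving $x = \bar{x} \circ \pi$ for a unique $\bar{x} \in A^{G/H}$. The resulting assignment $\Phi : \Fix(H) \to A^{G/H}$, $x \mapsto \bar{x}$, is a bijection; this is essentially \cite[Proposition 1.3.3]{CSC10} specialised to a normal subgroup.

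Next I would verify that $\Phi$ intertwines the shift action of $G$ on $\Fix(H)$ with the shift action of $G/H$ on $A^{G/H}$, the latter pulled back to $G$ along $\pi$. For $x \in \Fix(H)$ and $g, h \in G$ one computes $(g \cdot x)(h) = x(g^{-1}h) = \bar{x}(\pi(g)^{-1}\pi(h))$, so $\overline{g \cdot x} = \pi(g) \cdot \bar{x}$. In particular the $G$-action on $\Fix(H)$ factors through $\pi$, and since $\pi$ is surjective the image under $\Phi$ of a $G$-orbit $Gx$ is exactly the $(G/H)$-orbit of $\bar{x}$; hence $\Phi$ induces a bijection between the $G$-orbits in $\Fix(H)$ and the $(G/H)$-orbits in $A^{G/H}$.

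It remains to match stabilisers. Every $x \in \Fix(H)$ has $H \leq G_x$, so $G_x / H$ is a well-defined subgroup of $G/H$, and the computation above gives $(G/H)_{\bar{x}} = \pi(G_x) = G_x / H$. Thus $G_x = H$ if and only if $(G/H)_{\bar{x}}$ is trivial, so $\Phi$ restricts to a bijection between $\{ x : G_x = H \}$ and the aperiodic configurations of $A^{G/H}$, giving $\psi_H(G;A) = \psi_1(G/H;A)$. Passing to orbits through the correspondence of the previous paragraph — and noting $[H] = \{ H \}$ since $H$ is normal — yields $\alpha_{[H]}(G;A) = \alpha_{[1]}(G/H;A)$. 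I expect the only delicate point to be keeping the equivariance bookkeeping straight, in particular confirming that the $G$-action on $A^{G/H}$ is genuinely the $G/H$-shift composed with $\pi$; once this is in place the rest is a routine translation.
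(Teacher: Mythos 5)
Your proof is correct and follows essentially the same route as the paper: both rest on the $G/H$-equivariant bijection between $\Fix(H)$ and $A^{G/H}$ and then translate stabilisers (and orbits) across it. The only difference is that the paper simply cites this bijection from \cite[Proposition 1.3.7]{CSC10}, whereas you construct and verify it explicitly, which is a perfectly sound expansion of the same argument.
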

\begin{proof}
By \cite[Proposition 1.3.7.]{CSC10}, there is a $G/H$-equivariant bijection between $A^{G/H}$ and $\Fix(H)$. Hence, configurations in $A^{G/H}$ with trivial stabiliser are in bijection with the configurations in $A^G$ with stabiliser equal to $H$.  
\end{proof}

The previous lemma allows to apply the machinery of M\"obius functions of subgroup lattices which has been developed for a variety of finite groups (e.g. see \cite{DZ20,HIO89,P93}). 

Recall that the classical M\"obius function $\tilde{\mu}$ of the poset of natural numbers $\mathbb{N}$ ordered by divisibility is given by 
\[ \tilde{\mu}(d) = \begin{cases}
0 & \text{ if $d$ has a squared prime factor} \\
1 & \text{ if $d$ is square-free with an even number of prime factors} \\
-1 & \text{ if $d$ is square-free with an odd number of prime factors}.
\end{cases}\]
Using Lemma \ref{le-finite}, the following result gives the values of $\psi_H(G;A)$ in some particular cases when $H$ is a normal subgroup of $G$. 

\begin{lemma}\label{le-formula}
Let $G$ be a finitely generated group, let $H$ be a normal subgroup of $G$ of finite index, and let $A$ be a finite set. Let $n \in \mathbb{N}$, and let $p$ and $p^\prime$ be two distinct primes.
\begin{enumerate}
\item If $G/H \cong \mathbb{Z}_n$, then $\psi_{H}(G;A) = \sum_{d \mid n} \tilde{\mu}(d) \vert A \vert^{n/d}$.
\item If $G/H \cong \mathbb{Z}_{p^k}$, then $\psi_{H}(G;A)= \vert A \vert^{p^k} - \vert A \vert^{p^{k-1}}$.
\item If $G/H \cong \mathbb{Z}_{pp^\prime}$, then $\psi_{H}(G;A)= \vert A \vert^{pp^\prime} - \vert A \vert^{p} - \vert A \vert^{p^\prime}+ \vert A \vert$. 
\item If $G/H \cong \mathbb{Z}_{p} \oplus \mathbb{Z}_p$, then $\psi_{H}(G;A) =  \vert A \vert^{p^2} - (p+1)\vert A \vert^p + p\vert A \vert$.
\end{enumerate}
\end{lemma}
\begin{proof}
Parts (1), (2) and (3) follow as it is well-known that $\mu(1, \mathbb{Z}_n) = \tilde{\mu}(n)$ (as the subgroup lattice of $\mathbb{Z}_n$ is isomorphic to the divisibility lattice of $n$). For part (4), just observe that the group $\mathbb{Z}_{p} \oplus \mathbb{Z}_p$ has $\frac{p^2-1}{p-1}=p+1$ subgroups isomorphic to $\mathbb{Z}_p$ (as each of the $p^2-1$ nontrivial elements of $\mathbb{Z}_{p} \oplus \mathbb{Z}_p$ generates a subgroup with $p-1$ nontrivial elements), which account for all its proper nontrivial subgroups. 
\end{proof}	

In the rest of this section, we shall focus on the exact determination of the small values of $\alpha_{[H]}(G;A)$. The inspiration for this question is Lemma 5 in \cite{CRG19}, which established, without using the M\"obius function, that $\alpha_{[H]}(G;A) = 1$ if and only if $[G:H]=2$ and $\vert A \vert = 2$. In general, the classification of small values of $\alpha_{[H]}(G;A)$ is relevant as it classifies configurations with small $\Aut(A^G)$-orbits, and, when $G$ is finite, it classifies the small degrees of the symmetric groups appearing in the decomposition (\ref{aut-desc}) of $\Aut(A^G)$.   

For $x \in A^G$, we have $G_x = G$ if and only if $x$ is a constant configuration. As we have precisely $\vert A \vert$ constant configurations in $A^G$, then $\alpha_{[G]}(G;A) = \vert A \vert$. Hence, we shall exclude the case $H=G$ in the following theorem. Moreover, we exclude the degenerate case $\vert A \vert = 1$.

\begin{table}[!htb]
\setlength{\tabcolsep}{5pt}
\renewcommand{\arraystretch}{2}
\centering
\begin{tabular}{|c|c|c|c|c|}
\hline \backslashbox{$G/H$}{$\vert A \vert$} & $2$ & $3$ & $4$ & $5$  \\ \hline
$\mathbb{Z}_2$ & $1$ & $3$  & $6$ & $10$ \\ \hline
$\mathbb{Z}_3$ & $2$  & $8$  & $20$ & $40$  \\ \hline
$\mathbb{Z}_2^2$ & $2$  & $15$ & $54$ & $140$ \\   \hline
$\mathbb{Z}_4$ & $3$  & $18$ & $60$ & $150$ \\   \hline
$\mathbb{Z}_5$ & $6$ & $48$ & $204$ & $624$  \\ \hline
$S_3$ & $7$ & $108$ & $650$ & $2540$  \\ \hline
$\mathbb{Z}_6$ & $9$ & $116$ & $670$ & $2580$  \\ \hline
$\mathbb{Z}_7$ & $18$ & $312$ & $2340$ & $11160$  \\ \hline
\end{tabular}
\caption{Small values for $\alpha_{[H]}(G;A)$ with $H$ normal in $G$.}\label{tab:small}
\end{table} 

\begin{theorem}
Let $G$ be a finitely generated group, let $H$ be a proper normal subgroup of $G$ of finite index, and let $A$ a finite set with at least two elements.  
\begin{enumerate}
\item $\alpha_{[H]}(G;A) = 1$ if and only if $\vert A \vert=2$ and $[G:H] = 2$. 
\item $\alpha_{[H]}(G;A) = 2$ if and only if $\vert A \vert=2$ and $[G:H] = 3$, or $\vert A \vert=2$ and $G/H \cong \mathbb{Z}_2 \oplus \mathbb{Z}_2$.
\item $\alpha_{[H]}(G;A) = 3$ if and only if $\vert A \vert=3$ and $[G:H]=2$, or $\vert A \vert=2$ and $G/H \cong \mathbb{Z}_4$. 
\item $\alpha_{[H]}(G;A) = 6$ if and only if $\vert A \vert=2$ and $[G:H] =5$, or $\vert A \vert=4$ and $[G:H]=2$.
\item $\alpha_{[H]}(G;A) = 7$ if and only if $\vert A \vert=2$ and $G/H \cong S_3$. 
\item $\alpha_{[H]}(G;A) = 8$ if and only if $\vert A \vert=3$ and $[G:H] = 3$. 
\item $\alpha_{[H]}(G;A) = 9$ if and only if $\vert A \vert=2$ and $G/H \cong \mathbb{Z}_6$. 
\item $\alpha_{[H]} (G;A)= 10$ if and only if $\vert A \vert=5$ and $[G:H]=2$. 
\item $\alpha_{[H]}(G;A) \neq 4$ and $\alpha_{[H]}(G;A) \neq 5$. 
\end{enumerate}
\end{theorem}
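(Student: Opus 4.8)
The plan is to reduce the entire statement to a finite computation about quotient groups and then control the infinitely many groups that remain by an elementary lower bound. First I would invoke Lemma \ref{le-finite}: since $H$ is a proper normal subgroup of finite index, $Q := G/H$ is a finite group of order $n := [G:H] \geq 2$, and $\alpha_{[H]}(G;A) = \alpha_{[1]}(Q;A)$. As the stabiliser of an aperiodic point is trivial, the Orbit--Stabiliser relation gives $\alpha_{[1]}(Q;A) = \psi_1(Q;A)/n$, and Theorem \ref{th-main} yields $\psi_1(Q;A) = \sum_{K \leq Q} \mu(1,K)\,q^{n/|K|}$. Hence $\alpha_{[H]}(G;A)$ depends only on the isomorphism type of $Q$ and on $q = |A|$, so the task becomes: determine all pairs $(Q,q)$ with $\alpha_{[1]}(Q;A) \leq 10$.

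Next I would prove two monotonicity facts that cut the problem down to finitely many pairs. First, $\psi_1(Q;A)$ is non-decreasing in $q$: if $A \subseteq A'$, every aperiodic $x \in A^Q$ is still aperiodic in $(A')^Q$, so the count cannot drop; thus once $\alpha_{[1]}(Q;A) > 10$ at some $q_0$, the same holds for all $q \geq q_0$. Second, a lower bound in $n$: writing $\{x : Q_x \neq 1\} = \bigcup_{g \neq 1}\{x : g\cdot x = x\}$ and noting $|\{x : g\cdot x = x\}| = q^{n/|g|} \leq q^{n/p}$, where $p$ is the least prime dividing $n$, the union bound gives
\[ \psi_1(Q;A) \geq q^n - (n-1)\,q^{n/p}, \qquad \text{so}\qquad \alpha_{[1]}(Q;A) \geq \frac{q^n - (n-1)\,q^{n/p}}{n}. \]
Evaluating this estimate, for $q \geq 6$ it already exceeds $10$ for every $n \geq 2$, and for each fixed $q \in \{2,3,4,5\}$ it exceeds $10$ once $n$ passes a small threshold (namely $n \geq 3$ for $q \in \{4,5\}$, $n \geq 4$ for $q = 3$, and $n \geq 7$ for $q = 2$). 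Combined with the monotonicity in $q$, this confines all pairs with $\alpha_{[1]}(Q;A) \leq 10$ to $q \leq 5$ and $|Q| \leq 7$.

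Finally I would compute $\alpha_{[1]}(Q;A)$ for every group $Q$ of order at most $7$ and every $q \in \{2,3,4,5\}$, producing Table \ref{tab:small}. The cyclic cases and $\mathbb{Z}_p \oplus \mathbb{Z}_p$ follow directly from Lemma \ref{le-formula}; the only nonabelian group in range, $S_3$, needs the small extra M\"obius computation $\mu(1,\mathbb{Z}_2) = \mu(1,\mathbb{Z}_3) = -1$ and $\mu(1,S_3) = 3$, which gives $\psi_1(S_3;A) = q^6 - 3q^3 - q^2 + 3q$ and hence $\alpha_{[1]}(S_3;A) = (q^6 - 3q^3 - q^2 + 3q)/6$, equal to $7$ at $q=2$. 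Reading off the entries that are at most $10$ yields exactly the eight cases listed in items (1)--(8), and shows that the values $4$ and $5$ never appear, establishing (9).

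The step I expect to be the main obstacle is the lower bound: choosing one clean enough to state yet strong enough to eliminate all but finitely many groups. The union bound above is wasteful, since it ignores overlaps among the fixed-point sets and overcounts subgroups, so the delicate point is confirming that $\bigl(q^n - (n-1)q^{n/p}\bigr)/n > 10$ holds \emph{uniformly} for the infinitely many $n$ beyond each threshold. This is cleanest for $q \geq 3$, where $q^n$ dominates already at the threshold; the tightest situation is $q = 2$ near $n = 6,7$, where one must verify carefully that order $6$ still gives admissible values (the groups $\mathbb{Z}_6$ and $S_3$), while every group of order $\geq 7$ is excluded.
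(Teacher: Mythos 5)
Your proposal is correct, and its skeleton matches the paper's proof exactly: reduce to the finite quotient $Q = G/H$ via Lemma \ref{le-finite}, use a lower bound on $\alpha_{[1]}(Q;A)$ to confine the problem to finitely many pairs $(Q,q)$, compute Table \ref{tab:small} (via Lemma \ref{le-formula} plus the direct M\"obius computation for $S_3$, where your values $\mu(1,S_3)=3$ and $\alpha_{[1]}(S_3;A)\vert_{q=2}=7$ are right), and read off the answer. The genuine difference is the confining step. The paper imports it as a black box, citing Corollary 1.7.2 of \cite{GJS16} for the bound $q^{[G:H]}-q^{[G:H]-1} \leq \alpha_{[H]}(G;A)$ --- which, incidentally, as printed cannot be literally correct (at $q=2$, $[G:H]=2$ it would give $2 \leq 1$; the bound is really on $\psi_1$, i.e.\ $\alpha_{[H]}(G;A) \geq (q^n - q^{n-1})/n$ with $n=[G:H]$). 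You instead prove the needed bound from scratch: the union bound $\psi_1(Q;A) \geq q^n - (n-1)q^{n/p}$, with $p$ the least prime dividing $n$, together with monotonicity of $\psi_1$ in $q$. This buys self-containedness (no reliance on \cite{GJS16}, and it sidesteps the misstatement), at the cost of the extra bookkeeping you flag: checking the thresholds uniformly in $n$, which does go through (e.g.\ for even $n$ one can write the bound as $q^{n/2}\bigl(q^{n/2}-(n-1)\bigr)/n$ and note $q^{n/2} \geq 2(n-1)$ beyond the threshold). Your bound is in fact sharper than the paper's at prime $n$ --- at $q=2$, $n=7$ it gives $116/7 > 10$, excluding $\mathbb{Z}_7$ outright, whereas the paper's bound gives only $64/7 < 10$ and must fall back on the table entry $18$ --- while the paper's is sharper at some even $n$; either suffices, and both proofs end at the same finite inspection.
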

\begin{proof}
By Corollary 1.7.2 in \cite{GJS16}, 
\[ \vert A \vert^{[G:H]} - \vert A \vert^{[G:H]-1}	\leq \alpha_{[1]}(G/H,A) = \alpha_{[H]}(G;A).  \]
(This lower bound has been improved in Theorem 5 in \cite{CRG19}, but the above is enough for this proof). Hence, we see that $\alpha_{[1]}(G/H,A)$ is a strictly increasing function on both $[G:H]$ and $\vert A \vert$. Table \ref{tab:small} shows all values of $\alpha_{[1]}(G/H,A)$ with $[G:H] \leq 7$ and $\vert A \vert \leq 5$. Most of these values may be calculated by using the formulas of Lemma \ref{le-formula}; the only exception is the case $G/H \cong S_3$, which may be directly computed using the M\"obius function of the subgroup lattice of $S_3$ (see Figure \ref{s3-lattice}). The result follows by inspection of Table \ref{tab:small}.  

\begin{figure}
\centering
\begin{tikzpicture}[scale=1.5] 
  \node (one) at (0,2) {$S_3$};
  \node (a) at (-3,0) {$\langle(1,2)\rangle$};
  \node (b) at (-1,0) {$\langle(2,3)\rangle$};
  \node (c) at (1,0) {$\langle(1,3)\rangle$};
  \node (d) at (3,0) {$\langle(1,2,3)\rangle$};
  \node (zero) at (0,-2) {$\langle e\rangle$};
  \draw (zero) -- (a) -- (one) -- (b) -- (zero) -- (c) -- (one) -- (d) -- (zero);
\end{tikzpicture}
\caption{Subgroup lattice of $S_3$.}
\label{s3-lattice}
\end{figure}
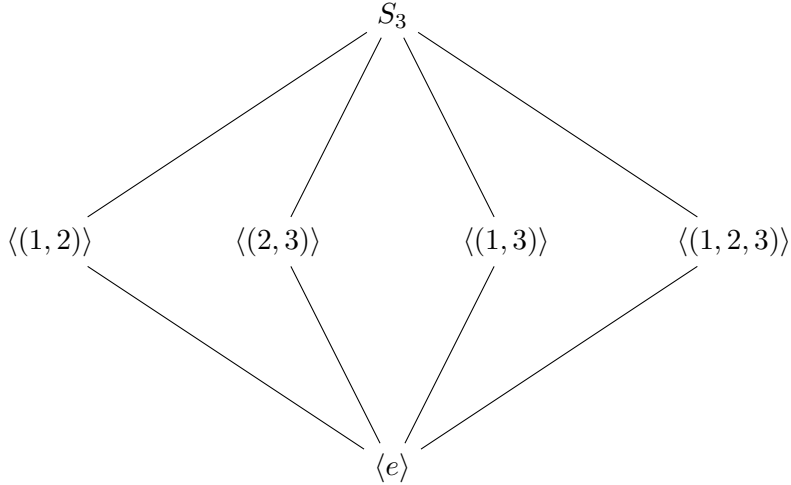

\end{proof} \bigskip

\textbf{Acknowledgments:} We sincerely thank the anonymous referee for all his precise comments that improve the quality of our manuscript. The first author of this paper was supported by a CONACYT Basic Science Grant (No. A1-S-8013) from the Government of Mexico. The second author of this paper was supported by a CONACYT National Scholarship for PhD.

\end{document}